\documentclass[11pt]{article}
\usepackage[margin=1.3in]{geometry}
\usepackage{amsmath,amssymb,amsthm,mathtools}
\usepackage{xcolor}
\usepackage{microtype}
\usepackage[colorlinks=true,linkcolor=blue!55!black,urlcolor=blue!55!black,citecolor=blue!55!black]{hyperref}
\newtheorem{theorem}{Theorem}[section]
\newtheorem{lemma}[theorem]{Lemma}
\newcommand{\R}{\mathbb R}
\newcommand{\LL}{\mathcal L}
\newcommand{\BV}{\mathrm{BV}}
\newcommand{\BD}{\mathrm{BD}}

\newcommand{\mres}{\!\mathbin{\vrule height 1.6ex depth 0pt width
		0.13ex\vrule height 0.13ex depth 0pt width 1.1ex}\!}

\title{Functions of bounded variation are not dense in the 
	 BD Norm}
\author{Panu Lahti\thanks{Email: \href{mailto:panulahti@amss.ac.cn}{\texttt{panulahti@amss.ac.cn}}.}
\quad and \quad Quoc-Hung Nguyen\thanks{Email: \href{mailto:qhnguyen@amss.ac.cn}{\texttt{qhnguyen@amss.ac.cn}}.}\\[0.5em]
\small Academy of Mathematics and Systems Science,\\
\small Chinese Academy of Sciences,\\
\small Beijing 100190, People's Republic of China}
\date{}
\begin{document}
\maketitle
\begin{abstract}
We prove that functions of bounded variation are not dense in the space
of functions of bounded deformation with respect to the 
BD norm.
Using alternating Riesz products and directional slicing,
 we construct on every nonempty open subset of $\R^n$  ($n\ge2$)
a continuous, compactly supported BD function that is at positive distance from BV.
\end{abstract}

\medskip
\noindent\textbf{Keywords.} Bounded deformation; bounded variation;
strong approximation; BD norm; Riesz products; singular measures.

\section{Introduction and main result}
\label{sec:negative}
Functions of bounded deformation (BD) describe displacements whose
linearized strain is a measure. The space was developed in connection
with variational models of plasticity; see Suquet~\cite{Suquet} and
Temam--Strang~\cite{TemamStrang}.
Ambrosio--Coscia--Dal Maso~\cite{ACDM} established the fine properties of BD functions,
including the decomposition of the symmetric derivative and the
structure of the jump set.
De Philippis--Rindler~\cite[Theorems~1.1 and~1.7]{DPR2016}
proved a structure theorem for $\mathcal A$-free measures and, in
particular, the symmetrized rank-one form of the polar of singular
strains in BD. They also characterized generalized Young measures
generated by symmetric gradients of BD fields~\cite{DPR2017}.
Their survey~\cite{DPR} presents the theory from the
viewpoint of linear PDE constraints. In the present paper, we study approximation by
BV functions (alternatively called BV fields) in the BD norm.


For an open set $\Omega\subset\R^n$ and $N\in\mathbb N$, the space $\BV(\Omega;\R^N)$
consists of $L^1$ functions whose distributional derivative $Dv$ is a finite
matrix-valued Radon measure.
We give precise definitions in Section \ref{sec:def}.
The space $\BD(\Omega)$ consists of functions
$u\in L^1(\Omega;\R^n)$ whose distributional symmetric derivative
$Eu=(Du+Du^T)/2$ is a finite matrix-valued Radon measure.
We use the Frobenius matrix norm to define the total variation $|Eu|$
and equip BD with the norm
\[
 \|u\|_{\BD(\Omega)}=\|u\|_{L^1(\Omega)}+|Eu|(\Omega).
\]
The inclusion $\BV(\Omega;\R^n)\subset\BD(\Omega)$ is continuous,
since $|Eu|(\Omega)\le|Du|(\Omega)$ for $u\in\BV(\Omega;\R^n)$.
Ornstein's non-inequality~\cite{Ornstein} is the classical obstruction
to an $L^1$ Korn inequality. It follows that in dimensions at least two, BV is a proper
subspace of BD. This raises a fundamental approximation problem in the
theory of functions of bounded deformation: is BV dense in BD with
respect to the BD norm? The present paper gives a negative answer.

BV theory contains results such as directional slicing~\cite{AFP}, the chain rules
of Vol'pert~\cite{Volpert} and Ambrosio--Dal Maso~\cite{ADM}, and
Alberti's rank-one theorem for the singular part of the
derivative~\cite{Alberti}. BD has its own fine structure
theory~\cite{ACDM,DPR2016,DPR}, but its norm controls only the symmetric
derivative. Ornstein's non-inequality~\cite{Ornstein} prevents control
of the full derivative by the symmetric one, so arguments that require
the full derivative cannot be transferred directly. Strong density
of BV in BD would offer a way to transfer arguments from BV to BD,
provided the relevant estimates are stable under this convergence.

A specific motivation comes from transport equations and regular
Lagrangian flows. Ambrosio--De Lellis--Mal\'y~\cite{ADLM2007} studied
chain rules for the divergence of BV-like vector fields and their
applications to transport equations. Ambrosio--Crippa--Maniglia~\cite{ACM2005}
developed normal trace theory for locally bounded BD fields and proved
renormalization results under SBD hypotheses. Nguyen~\cite{Nguyen2021}
established quantitative stability and
well-posedness for vector fields of the form
\[
 B^i=\sum_{j=1}^{m}K_j^i*b_j,
 \quad b_j\in L^1((0,T);\BV(\R^n)),
\]
under suitable assumptions on the singular kernels $K_j^i$, a growth
condition on $B$, and
$\operatorname{div}B\in L^1((0,T);L^\infty(\R^n))$.
We refer to this class as $K*\BV$. A natural question is whether the
theory extends to $K*\BD$, where the input is a vector-valued BD field.
Strong approximation of that input by BV fields would suggest a route
to such an extension.
The present theorem rules out a
general reduction based on strong BV approximation of arbitrary BD
inputs.

On bounded Lipschitz domains, smooth Sobolev functions are strictly dense
in BD; see
Barroso--Fonseca--Toader~\cite[Theorem~2.6]{BFT}. Strict convergence
means
\[
 u_j\to u\quad\text{in }L^1(\Omega),\quad
 Eu_j\overset{*}{\rightharpoondown}Eu,\quad
 |Eu_j|(\Omega)\to |Eu|(\Omega).
\]
Strong BD convergence, or convergence in the BD norm, requires convergence of the \emph{difference measure}
in total variation:
\[
 \|u_j-u\|_{L^1(\Omega)}+|Eu_j-Eu|(\Omega)\to 0.
\]
The area-strict approximation results in
\cite[Section~4, p.~409]{DPR} use a topology that is also weaker than the strong topology.

Several subclasses admit strong approximation by BV functions. Write
$\mathrm{LD}(\Omega)$ for the BD functions whose symmetric
derivatives are absolutely continuous with respect to Lebesgue measure,
and $\mathrm{SBD}(\Omega)$ for those with no Cantor part in their
symmetric derivatives. Dal Maso--Toader~\cite[Corollary~4.2]{DMT}
show that every SBD function on a bounded open set is the sum of an LD
function and an SBV function. On a bounded Lipschitz domain, this decomposition
and the strong smooth approximation of LD functions in
\cite[Theorem~2.6]{BFT} imply strong approximation of SBD functions by
BV functions. Crismale~\cite[Theorem~1.2]{Crismale} constructs approximants
with additional geometric structure and strong BD convergence;
see also~\cite{CrismaleLincei} for an account of these density results
and their applications to fracture energies.

Approximation and compactness have also been developed in the spaces
$\mathrm{GBD}$ and $\mathrm{GSBD}$, introduced by Dal Maso~\cite{DMGBD}
through slicing to treat displacements without an a priori integrability
bound. For $p>1$, the space $\mathrm{GSBD}^p$ consists of GSBD fields
whose approximate symmetric gradient belongs to $L^p$ and whose jump
set has finite $\mathcal H^{n-1}$ measure. On bounded Lipschitz domains,
Chambolle--Crismale~\cite{CCDensity} proved approximation in this space
by bounded SBV fields with regular
jump sets, with convergence in measure, strong $L^p$ convergence of the
approximate symmetric gradients, and control of the jump sets. This
construction yields approximation results for brittle fracture energies.

Chambolle--Crismale~\cite{CCCompactness} also proved compactness and lower
semicontinuity in GSBD under bounds on the Griffith energy, allowing
a set of finite perimeter on which the displacements diverge.
Under a uniform bound on the GBD slicing measures, their general
compactness theorem~\cite{CC2025} gives almost-everywhere convergence
along a subsequence after subtracting piecewise infinitesimal rigid
motions on a fixed Caccioppoli partition. More recently, they
characterized GBD by slicing conditions in finitely many
directions~\cite{CC2026}. These results provide tools for variational
problems in fracture mechanics. They do not, however, assert strong
BV approximation of arbitrary BD functions, whose symmetric derivatives
may have a Cantor part.

In the present paper, we use classical lacunary Riesz products~\cite{Riesz}.
Kazaniecki--Wojciechowski~\cite{KW2014,KW2024} developed finite
Riesz-product constructions for Ornstein-type non-inequalities;
alternating sums of successive increments also appear in
\cite[Section~4]{KW2024}.

The following is our main result.

\begin{theorem}[Non-density in the BD norm]\label{thm:nondensity}
Let $Q=(0,1)^2$. There exist $u\in C(\overline Q;\R^2)\cap\BD(Q)$,
functions $H_1,H_2\in C(\overline Q)$, and singular nonatomic probability
measures $\nu_1,\nu_2$ on $(0,1)$ such that, with
$\mu=\nu_1\otimes\nu_2$,
\begin{equation}\label{eq:main-strain}
 Eu=\begin{pmatrix}H_1&0\\0&H_2\end{pmatrix}\LL^2
   +\begin{pmatrix}0&1\\1&0\end{pmatrix}\mu,
\end{equation}
and
\begin{equation}\label{eq:main-gap}
 \inf_{v\in\BV(Q;\R^2)}|E(u-v)|(Q)\ge\sqrt2.
\end{equation}
Consequently, BV is not dense in BD in the norm topology of the latter. More generally,
for every $n\ge2$ and every nonempty open $\Omega\subset\R^n$, there is
$U\in C_c(\Omega;\R^n)\cap\BD(\Omega)$ whose distance
from $\BV(\Omega;\R^n)$ in the BD norm is positive.
\end{theorem}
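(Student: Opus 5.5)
The plan is to build $u$ so that its off-diagonal strain is exactly the product of two singular Riesz-product measures, and the diagonal strain is continuous. Then I would show by a slicing argument that this off-diagonal strain is mutually singular with $|Dv|$ for every $v\in\BV$.

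\emph{Construction.} Fix a lacunary sequence of frequencies $\lambda_k$ (say $\lambda_{k+1}/\lambda_k\to\infty$ fast, integer multiples of $2\pi$). Let
\[
P^1_k(t)=\prod_{j\le k}(1+\cos(\lambda_{2j-1}t)),\qquad P^2_k(t)=\prod_{j\le k}(1+\cos(\lambda_{2j}t)),
\]
so the odd frequencies are used in the $x_1$ variable and the even ones in $x_2$. By Riesz's classical theorem, $P^i_k\,dt\overset{*}{\rightharpoondown}\nu_i$, where $\nu_i$ is a singular, nonatomic probability measure on $(0,1)$.

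Set $\mu_k=P^1_k(x_1)P^2_k(x_2)$ and telescope $\mu$ in alternating steps:
\[
\mu=\sum_k\Big[(P^1_{k}-P^1_{k-1})(x_1)\,P^2_{k-1}(x_2)+P^1_k(x_1)\,(P^2_k-P^2_{k-1})(x_2)\Big].
\]
Each bracketed increment oscillates only at the newest, largest frequency, and only in one variable.
\begin{itemize}
\item For the first kind of increment, I would take its $x_1$-antiderivative $a_k$, which has size $O(\|P^2_{k-1}\|_\infty/\lambda_{2k-1})$, and add $2a_k$ to $u_2$.
\item For the second kind, I would take its $x_2$-antiderivative $b_k$ and add $2b_k$ to $u_1$.
\end{itemize}
With this assignment, $\partial_2u_1+\partial_1u_2=2\mu$ in the sense of distributions, which gives the off-diagonal entries in \eqref{eq:main-strain}.

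The diagonal entries come from the cross derivatives $\partial_1 b_k$ and $\partial_2 a_k$. Differentiating $b_k$ in $x_1$ hits only the lower frequencies $\lambda_1,\dots,\lambda_{2k-1}$, while the antiderivative gained a factor $1/\lambda_{2k}$. So
\[
\|\partial_1 b_k\|_\infty\lesssim \frac{\lambda_{2k-1}\,2^{2k}}{\lambda_{2k}},
\]
and the analogous bound holds for $\partial_2 a_k$. Choosing the lacunarity so that these bounds, and the sup norms of $a_k$ and $b_k$, are summable makes $u$ continuous and $H_1=\partial_1u_1$, $H_2=\partial_2u_2$ uniformly convergent series of continuous functions.

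Getting these estimates to close, and handling the boundary terms so that $u\in C(\overline Q)$, is the main technical obstacle. It is exactly why the increments must alternate between the two variables.

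\emph{Gap \eqref{eq:main-gap}.} Let $v\in\BV(Q;\R^2)$. By directional slicing \cite{AFP},
\[
|\partial_1 v_2|\le\int_0^1 |D(v_2(\cdot,x_2))|\,dx_2 ,
\]
so $|\partial_1v_2|$ disintegrates with respect to $\LL^1$ in the variable $x_2$. Since $\nu_2\perp\LL^1$, the product $\mu=\nu_1\otimes\nu_2$ is concentrated on $(0,1)\times S_2$ with $\LL^1(S_2)=0$, and this set is $|\partial_1v_2|$-null. Symmetrically, $|\partial_2 v_1|$ vanishes on $S_1\times(0,1)$, which carries $\mu$. The diagonal entries of $Dv$ have the same property, and $\LL^2\perp\mu$.

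Hence $\mu$ is singular with respect to $|Ev|+\LL^2$. Restricting to a $\mu$-full null set, $|E(u-v)|\ge|Eu\mres S|$, which equals the Frobenius norm of $\begin{pmatrix}0&1\\1&0\end{pmatrix}$ times $\mu$, namely $\sqrt2\,\mu(Q)=\sqrt2$. The infimum is therefore at least $\sqrt2$, and BV is not dense since the $L^1$ term only adds to the norm.

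\emph{General $n$ and $\Omega$.} Translate and scale $Q$ into a cube inside $\Omega$. Put
\[
U(x)=\psi(x)\big(u(x_1,x_2),0,\dots,0\big),
\]
where $\psi\in C_c^\infty$ equals $1$ on a smaller cube $Q'$ with $(\mu\otimes\LL^{n-2})(Q')>0$; such $Q'$ exists because $\nu_i$ are nonatomic probability measures.

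Then $EU=\psi\,(Eu\otimes\LL^{n-2})+\mathrm{sym}(\nabla\psi\otimes U)\LL^n$, which is finite, so $U\in C_c(\Omega;\R^n)\cap\BD(\Omega)$. The same slicing argument, applied in the $x_1$ and $x_2$ directions, shows that $\mu\otimes\LL^{n-2}\mres Q'$ is singular with respect to $|Dv|$ for every $v\in\BV(\Omega;\R^n)$ and with respect to $\LL^n$. This gives
\[
\operatorname{dist}_{\BD}(U,\BV)\ge\sqrt2\,(\mu\otimes\LL^{n-2})(Q')>0 .
\]
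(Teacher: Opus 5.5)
Your proposal is essentially the paper's proof. Your two kinds of brackets are exactly the paper's increments $g_k=F_k-F_{k-1}$ for odd and even $k$, with $a=1$ and $\lambda_j=2\pi N_j$. You place their antiderivatives in the new variable into the other component exactly as the paper's $w_k$ do, control the cross derivatives by lacunarity in the same way, and separate from BV by the same slicing argument on a product of two Lebesgue-null carriers. The only real difference is that the paper proves singularity and nonatomicity of $\nu_1,\nu_2$ from scratch, for $a\in(0,1)$, rather than quoting the classical Riesz--Zygmund theorem.

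Two small repairs are needed. First, your telescoping identity omits $\mu_0=1$, so as written your off-diagonal density is $\mu-\LL^2$. To get \eqref{eq:main-strain} exactly you must add the affine field $(x_2,x_1)$, as the paper does; the gap is unaffected since $\LL^2\perp\mu$. Second, in dimension $n$ the derivatives $D_kv$ with $k\ge3$ also require slicing in the $x_k$ direction. This works because the projection of $S_1\times S_2\times(0,1)^{n-2}$ along $e_k$ is still $\LL^{n-1}$-null. Alternatively, you can bound $|E(U-v)|$ from below by $\sqrt2$ times the total variation of its $(1,2)$ entry, which involves only $D_1$ and $D_2$.
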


We give definitions in Section \ref{sec:def},
we construct the measures in Section~\ref{sec:riesz-measures},
and we prove the theorem in Section~\ref{sec:alternating}.

\section{Definitions and some background theory}\label{sec:def}

Here we will give, in detail, definitions and background theory needed in the paper.
We mostly follow the monograph \cite{AFP}.

We work in $\R^n$, $n\in\mathbb{N}$.
For the space $\R^{n\times n}$ of $(n\times n)$-dimensional square matrices,
we use the Frobenius norm $|A|:=\sqrt{\sum_{j,k}(A_j^k)^2}$,
which is the Euclidean norm on $\R^{n^2}$.
The subspace of symmetric matrices is denoted by $\R^{n\times n}_{\textrm{sym}}$.
The Lebesgue measure in $\R^n$ is denoted by $\mathcal L^n$.

By $\Omega\subset \R^n$ we always mean an open set.
We denote by $\mathcal M^+(\Omega)$ the space of positive Radon measures on $\Omega$.  
For $\ell\in\mathbb{N}$, we denote by
$\mathcal M(\Omega;\R^{\ell})$ the Banach space of $\R^\ell$-valued
Radon measures $\mu$ on $\Omega$. The space is equipped with the
\emph{total variation norm} $|\mu|(\Omega)$, which is defined
with respect to the Euclidean norm on $\R^{\ell}$.
By the Riesz
representation theorem, $\mathcal M(\Omega;\R^{\ell})$ can be identified
with the dual space of $C_0(\Omega;\R^{\ell})$ through the duality
pairing
$\langle\phi,\mu\rangle\coloneqq \int_{\Omega} \phi\cdot d\mu\coloneqq \sum_{j=1}^{\ell} 
\int_{\Omega} \phi_j \,d\mu_j$.
Thus weak* convergence $\mu_i\overset{*}{\rightharpoondown}\mu$
in $\mathcal M(\Omega;\R^{\ell})$ means
$\langle\phi,\mu_i\rangle\to \langle\phi,\mu\rangle$ for all
$\phi\in C_0(\Omega;\R^{\ell})$.

For open sets $E\subset \R^{n-m}$, $F\subset \R^m$ with $m \in \{1,\ldots,n-1\}$,
a \emph{parametrized measure} $(\nu_{y})_{y \in E}$ is a mapping from $E$ to the set
$\mathcal M(F;\R^{\ell})$ of vector-valued Radon measures on $F$.
Given $\mu\in\mathcal M^+(E)$, it is said to be
\emph{weakly* $\mu$-measurable}, if $y\mapsto \nu_{y}(B)$ is
$\mu$-measurable for all Borel sets $B\subset F$.
Suppose that we additionally have
\[
\int_{E}|\nu_{y}|(F)\,d\mu(y)<\infty.
\]
Then we define the generalized product measure by
\begin{equation}\label{eq:product measure}
	\mu\otimes\nu_y(A)\coloneqq \int_E \left(\int_F \chi_A(y,t)\,d\nu_y(t)\right)\,d\mu(y)
\end{equation}
for every Borel set $A\subset E\times F$.

A function $u\in L^1(\Omega;\R^N)$, with $N\in\mathbb{N}$,
is a function of bounded variation,
denoted $u\in \BV(\Omega;\mathbb{R}^N)$, if its distributional derivative
is an $\R^{N\times n}$-valued Radon measure with finite total variation. This means that
there exists a (unique) Radon measure $Du\in \mathcal M(\Omega;\R^{N\times n})$
such that for all $\varphi\in C_c^1(\Omega)$, the integration-by-parts formula
\[
\int_{\Omega}u_k\partial_j\varphi\,dx
=-\int_{\Omega}\varphi\,d(Du)_j^k,\quad j=1,\ldots,n,\ k=1,\ldots,N
\]
holds.

We write the Radon-Nikodym decomposition of the variation measure of $u$ into the absolutely continuous and singular parts as $Du=D^a u+D^s u$. Furthermore, $D^su$ can be decomposed into the \emph{Cantor}
and \emph{jump} parts $D^c u$ and $D^j u$, of which $D^j u$ is carried on a set that is $\sigma$-finite
with respect to the $(n-1)$-dimensional Hausdorff measure, and $D^c u$ does not charge any set of ($\sigma$-)finite
$(n-1)$-Hausdorff measure. Thus we have the decomposition
\[
Du = D^a u + D^c u +D^j u.
\]
Analogous facts hold for BD functions, see \cite{ACDM}; apart from the Introduction, we will not specifically
use these decompositions in the paper, though it is useful to notice that the right-hand side of
\eqref{eq:main-strain} is the sum of an absolutely continuous and a Cantor part.


The space $\BD(\Omega)$ of functions of bounded deformation is the space of
functions $u\in L^1(\Omega;\R^n)$ whose distributional symmetrized derivative
is a finite Radon measure, that is, there exists a (unique) Radon measure
$Eu\in \mathcal M(\Omega;\R^{n\times n}_{\textrm{sym}})$ such that
for all $\varphi\in C_c^1(\Omega)$,
the integration-by-parts formula
\[
\frac{1}{2}\int_{\Omega}(u_j \partial_k \varphi+u_k\partial_j \varphi)\,dx
=-\int_{\Omega}\varphi \, d(Eu)_j^k,
\quad j,k=1,\ldots,n
\]
holds. Note that we do not write $\BD(\Omega;\R^n)$ because the target space necessarily
has the same dimension as the domain.

In $\R^n$, denote by $\pi_n\colon\R^n\to \R^{n-1}$ the orthogonal projection onto
$\R^{n-1}$:
for $x=(x_1,\ldots,x_n)\in\R^n$,
\begin{equation}\label{eq:projection}
\pi_n((x_1,\ldots,x_n)):=(x_1,\ldots,x_{n-1}).
\end{equation}
Similarly, define $\pi_j$, $j=1,\ldots,n-1$.
Denote the standard basis vectors by $e_k$, $k=1,\ldots,n$.
For any fixed $k\in\{1,\ldots,n\}$ --- for simplicity we can assume $k=n$ ---
for every $z\in\pi(\Omega)$ we denote the slices of $\Omega$ at $(z,0)$ in $e_n$-direction by
\[
\Omega_z\coloneqq \{t\in\R\colon (z,t e_n) \in \Omega\}.
\]
We also denote
$u_z(t)\coloneqq u(z,t)$ for $z\in\pi(\Omega)$ and $t\in \Omega_z$.
Given $u\in\BV(\Omega)$, for $\mathcal L^{n-1}$-almost
every $z\in\pi(\Omega)$, we have
$u_z\in \BV(\Omega_z)$ (see \cite[Theorem 3.103]{AFP}).
Recall \eqref{eq:product measure}.
Denoting by $D_l f$ the inner product between  $Df$ and $e_l$, we further have
\[
D_l f=\mathcal L^{n-1}\otimes D f_{z,l},
\]
see \cite[Theorem 3.107]{AFP}.
It follows that
\begin{equation}\label{eq:slice representation for total variation}
	|D_l f|=\mathcal L^{n-1}\otimes |D f_{z,l}|.
\end{equation}

In particular, for Borel sets $A,B\subset(0,1)$ of Lebesgue measure zero and $f\in\BV(Q)$
with $Q=(0,1)\times (0,1)$, we have
\begin{equation}\label{eq:BV-null-product}
 |D f|(A\times B)=0.
\end{equation}
The same is seen to be true for $f\in\BV(Q;\R^2)$ by considering $f$ componentwise.
The measure $\mu$ in our main theorem will be carried on such a set $A\times B$.

\section{The two singular Riesz measures}
\label{sec:riesz-measures}
We construct the singular product measure $\mu$ needed
in Theorem \ref{thm:nondensity}.

Let $\mathbb T=\R/\mathbb Z$, equipped with the Lebesgue measure
$\LL^1$. On $\mathbb T^2$, we consider the $2$-dimensional
Lebesgue measure $\LL^2=\LL^1\otimes\LL^1$.
Choose a parameter $a\in(0,1)$, which will remain fixed throughout the construction.
Set $S_0=0$, and recursively choose positive integers
$N_k$, and sums $S_{k-1}=\sum_{j<k}N_j$, choosing the $N_k$ so large that
\begin{align}
& N_k>4S_{k-1},\label{eq:dissociation}\\
& \frac{a(1+a)^{k-1}(\pi^{-1}+2S_{k-1})}
 {N_k-S_{k-1}}\le2^{-k}.\label{eq:growth}
\end{align}
For example, one can choose
$N_k$ to be an integer at least
\[
4S_{k-1}+2^ka(1+a)^{k-1}(\pi^{-1}+2S_{k-1})+1.
\]
Set $F_0=1$ and, for $k\ge1$, define
\begin{equation}\label{eq:riesz-product}
 F_k(x_1,x_2)=
 \prod_{\substack{1\le j\le k\\j\text{ odd}}}
 \bigl(1+a\cos(2\pi N_jx_1)\bigr)
 \prod_{\substack{1\le j\le k\\j\text{ even}}}
 \bigl(1+a\cos(2\pi N_jx_2)\bigr).
\end{equation}
Empty products are understood to equal one.
Each factor is bounded below by $1-a>0$.
The constant Fourier coefficient is one, so
$F_k\LL^2$ are probability measures on $\mathbb T^2$.

By a nonatomic measure $\nu$ we mean that
$\nu(\{t\})=0$ for all $t\in\mathbb T$.

\begin{lemma}\label{lem:riesz}
There exist nonatomic probability measures $\nu_1,\nu_2$ on $\mathbb T$,
both singular with respect to $\LL^1$, such that
\[
 F_k\LL^2\overset{*}{\rightharpoondown}\nu_1\otimes\nu_2=:\mu
 \quad\text{on }\mathbb T^2.
\]

Represent points of $\mathbb T$ by numbers in $[0,1)$; since $\nu_1(\{0\})=0=\nu_2(\{0\})$,
we may regard
$\nu_1$, $\nu_2$ as probability measures on $(0,1)$. There are Borel sets
$A,B\subset(0,1)$ such that
\[
 \LL^1(A)=\LL^1(B)=0,
 \quad \nu_1(A)=\nu_2(B)=\mu(A\times B)=1.
\]
\end{lemma}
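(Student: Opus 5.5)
The plan is to split the product into its two one-dimensional factors and use classical Riesz product theory for each. Write $F_k(x_1,x_2)=P_k(x_1)R_k(x_2)$, where $P_k$ is the product over odd $j\le k$ and $R_k$ the product over even $j\le k$. By \eqref{eq:dissociation} the frequencies $N_j$ are lacunary with $N_{j+1}>4S_j\ge 4N_j$, hence in particular with $N_{j+1}/N_j\ge3$. Expanding $\prod(1+a\cos(2\pi N_jt))$ then gives a sum of distinct exponentials $e^{2\pi i(\sum\epsilon_jN_j)t}$ with $\epsilon_j\in\{-1,0,1\}$, and no cancellation or coincidence occurs. Indeed, a nonzero signed sum $\sum_{j\le m}\epsilon_jN_j$ with $\epsilon_m\neq0$ has absolute value at least $N_m-S_{m-1}>0$, and the representation is unique. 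So for each fixed integer frequency $\ell$ the Fourier coefficient $\widehat{P_k}(\ell)$ stabilizes once $k$ is large, because later factors only contribute frequencies of size at least $N_k-S_{k-1}>|\ell|$. The total masses are $1$. Therefore $P_k\LL^1\overset{*}{\rightharpoondown}\nu_1$ and $R_k\LL^1\overset{*}{\rightharpoondown}\nu_2$, where $\nu_1,\nu_2$ are the probability measures on $\mathbb T$ with Fourier coefficients $\widehat{\nu_i}(\ell)=\prod (a/2)^{|\epsilon_j|}$ on the lacunary words and $0$ elsewhere; existence and uniqueness follow from Riesz representation and the fact that trigonometric polynomials are dense. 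Since the Fourier coefficients of $F_k\LL^2$ factor as $\widehat{P_k}(\ell_1)\widehat{R_k}(\ell_2)$, they converge to $\widehat{\nu_1}(\ell_1)\widehat{\nu_2}(\ell_2)$. With uniformly bounded mass this gives $F_k\LL^2\overset{*}{\rightharpoondown}\nu_1\otimes\nu_2$ on $\mathbb T^2$.

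Singularity comes next. I would use the classical Zygmund dichotomy for Riesz products: since $\sum a^2=\infty$, each $\nu_i$ is singular with respect to $\LL^1$. To keep the argument self-contained I would instead show it directly. Set $g_J(t)=J^{-1}\sum_{j\in J}\cos(2\pi N_jt)$ over the first $J$ odd indices. Under $\LL^1$ this has mean $0$ and variance $1/(2J)$, by orthogonality. Under $\nu_1$ it has mean $a/2$, since $\widehat{\nu_1}(\pm N_j)=a/2$. Its variance under $\nu_1$ is $O(1/J)$: the cross terms $\int\cos\cos\,d\nu_1$ for $j\neq j'$ equal $a^2/4$ because of dissociation, and $N_j\pm N_{j'}$ and $2N_j$ are again words or non-words with coefficients controlled by uniqueness of representation. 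Chebyshev then gives sets $E_J=\{|g_J-a/2|<a/4\}$ with $\nu_1(E_J)\to1$ and $\LL^1(E_J)\to0$. Passing to a subsequence with $\sum\LL^1(E_J)<\infty$ and setting $A=\limsup E_J$ gives $\LL^1(A)=0$. Because $\nu_1(E_J^c)\to0$ summably along a further subsequence, $\nu_1(\liminf E_J)=1$. So I take $A$ to be the $\liminf$ set, which is contained in the $\limsup$ set, and get $\nu_1(A)=1$ and $\LL^1(A)=0$. The set $B$ for $\nu_2$ is built the same way. Then $\mu(A\times B)=\nu_1(A)\nu_2(B)=1$.

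The last property is nonatomicity. For $\nu_1$, every atom $t$ satisfies $\nu_1(\{t\})^2\le\lim_{L}\frac1{2L+1}\sum_{|\ell|\le L}|\widehat{\nu_1}(\ell)|^2$ by Wiener's lemma. The nonzero coefficients sit on the lacunary words, whose counting function up to $L$ grows only like $3^{\#\{j:N_j\le L\}}$. That is $o(L)$, since $N_{j+1}\ge4N_j$ gives $3^{m}/N_m\to0$. So the limit is $0$, and the same holds for $\nu_2$. Then $\nu_i(\{0\})=0$, which allows $\nu_i$ to be viewed on $(0,1)$, and $A,B$ can be intersected with $(0,1)$.

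The main obstacle is the careful bookkeeping of dissociation. One needs that products like $\cos(2\pi N_jt)\cos(2\pi N_{j'}t)$ pair correctly against $\nu_1$, and that the word count is $o(L)$. Both reduce to uniqueness of signed sums, which follows from $N_k>4S_{k-1}$. If the variance bound turns out to be messy, I would fall back on citing Zygmund's theorem on the singularity of Riesz products.
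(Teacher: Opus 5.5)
Your proposal is correct. For the weak* convergence and for singularity it is essentially the paper's argument. The paper stabilizes the Fourier coefficients using uniqueness of signed sums, and tests against $\varphi(x_1)\psi(x_2)$ instead of two-dimensional exponentials, which amounts to the same thing. It then applies Chebyshev to $J^{-1}\sum_{j\le J}\cos(2\pi M_jt)$ under $\nu$ and under $\LL^1$ along dyadic $J$, followed by Borel--Cantelli. Your single threshold $a/4$ with $A=\liminf_s E_{2^s}$ is a repackaging of the paper's carrier $\{Z_{2^s}\to a/2\}$.

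The genuine difference is nonatomicity. The paper notes that the odd and even spectra meet only at $0$, so $\widehat{\nu_1}(q)\widehat{\nu_2}(q)=0$ for $q\ne0$ and hence $\nu_1*\nu_2=\LL^1$. An atom of $\nu_1$ at $t_0$ would then force $\LL^1(t_0+E_2)\ge\nu_1(\{t_0\})>0$, where $E_2$ is the Lebesgue-null carrier of $\nu_2$. That argument is fully elementary, but it uses the partner measure and its singularity, so it depends on Step 4 and on the two-factor structure.

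Your route through Wiener's lemma handles each measure separately and needs no singularity. It works for any single Riesz product whose spectrum has $o(L)$ points in $[-L,L]$. The cost is an external theorem plus a counting estimate that needs a growth ratio larger than $3$, which \eqref{eq:dissociation} provides.

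Two small points:
\begin{itemize}
\item Uniqueness of representation follows from $N_p>2S_{p-1}$ applied to the \emph{difference} of two words, not merely from $N_m-S_{m-1}>0$.
\item You need not decide whether $2N_j$ is a word. The diagonal terms are bounded by $1$, and the vanishing covariances already give variance at most $1/J$.
\end{itemize}
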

\begin{proof}
For a finite measure $\nu$ on $\mathbb T$, the Fourier coefficients are
\[
 \widehat\nu(q)=\int_{\mathbb T}e^{-2\pi iqt}\,d\nu(t),
 \quad q\in\mathbb Z.
\]

\emph{Step 1: Uniqueness of signed frequency sums.}

We first prove a uniqueness result: suppose two finitely supported sequences
$(\varepsilon_j)$ and $(\varepsilon'_j)$ with entries in $\{-1,0,1\}$
satisfy $\sum_{j=1}^{\infty}\varepsilon_jN_j=\sum_{j=1}^{\infty}\varepsilon'_jN_j$.
We claim that $(\varepsilon_j)$ and $(\varepsilon'_j)$ necessarily coincide; suppose instead that they
are distinct, and let
$p=\max\{j:\varepsilon_j\ne\varepsilon'_j\}$.
Subtracting the sums cancels all terms with $j>p$. Taking absolute
values gives
\[
 N_p\le |\varepsilon_p-\varepsilon'_p|N_p
 =\left|\sum_{j<p}(\varepsilon_j-\varepsilon'_j)N_j\right|
 \le2S_{p-1}<N_p\quad\textrm{by }\eqref{eq:dissociation},
\]
a contradiction.
This proves the claim.
In particular, zero has only the all-zero representation.\\

%

\emph{Step 2: Construction and Fourier coefficients of $\nu_1$, $\nu_2$.}\\
Let $(M_j)_{j\ge1}$ denote either $(N_{2j-1})_{j\ge1}$ or
$(N_{2j})_{j\ge1}$, and set $R_0=1$ and
\[
 R_r(t)=\prod_{j=1}^r\bigl(1+a\cos(2\pi M_jt)\bigr),\quad r\in\mathbb{N}.
\]
We have
\[
 1+a\cos(2\pi M_jt)
 =1+\frac a2 e^{2\pi iM_jt}+\frac a2 e^{-2\pi iM_jt}.
\]
When we expand the product, we choose one of the three terms from each factor. For
$j=1,\ldots,r$, the number $\varepsilon_j\in\{-1,0,1\}$ records this
choice in the $j$-th factor: $\varepsilon_j=0$ selects $1$,
$\varepsilon_j=1$ selects $(a/2)e^{2\pi iM_jt}$, and
$\varepsilon_j=-1$ selects $(a/2)e^{-2\pi iM_jt}$.
Thus
the product expands as
\[
 R_r(t)=\sum_{\varepsilon_1=-1}^{1}\ldots
         \sum_{\varepsilon_r=-1}^{1}
 \left(\frac a2\right)^{\sum_{j=1}^r|\varepsilon_j|}
 e^{2\pi it\sum_{j=1}^r\varepsilon_jM_j}.
\]
For $q\in\mathbb Z$, we get the Fourier coefficient
\begin{align*}
 \widehat{R_r\LL^1}(q)
 &=\int_0^1 R_r(t)e^{-2\pi iqt}\,dt\\
 &=\sum_{\varepsilon_1=-1}^{1}\ldots
 \sum_{\varepsilon_r=-1}^{1}
   \left(\frac a2\right)^{\sum_{j=1}^r|\varepsilon_j|}
   \int_0^1 e^{2\pi it(\sum_{j=1}^r\varepsilon_jM_j-q)}\,dt.
\end{align*}
Only terms satisfying $\sum_{j=1}^r\varepsilon_jM_j=q$
contribute to this Fourier coefficient, and
by Step 1, there is at most one finitely supported sequence $(\varepsilon_j)$
satisfying the equality. 
Thus $\widehat{R_r\LL^1}(0)=1$
and for every fixed $q$, as we increase $r$, $\widehat{R_r\LL^1}(q)$ will eventually remain the same.
Since $R_r\ge0$, the measures $R_r\LL^1$
are probability measures. Weak compactness on $\mathbb T$, together
with the fact that Fourier coefficients determine a measure, gives
convergence of the full sequence to a probability measure $\nu$, with
\begin{equation}\label{eq:nu Fourier}
 \widehat\nu(q)=
 \begin{cases}
  (a/2)^{\sum_{j=1}^{\infty}|\varepsilon_j|},
   &q=\sum_{j=1}^{\infty}\varepsilon_jM_j
     \text{ with finitely supported }\varepsilon_j\in\{-1,0,1\},\\
  0,&\text{otherwise}.
 \end{cases}
\end{equation}
These coefficients are real. Write $\nu_1=\nu$ and $\nu_2=\nu$ for the cases of
$(N_{2j-1})_{j\ge1}$ and $(N_{2j})_{j\ge1}$, respectively.\\

\emph{Step 3: Convergence to the product measure.}\\
Write $R_r^{(1)}$ and $R_r^{(2)}$ for the products of Step 2 associated
with the odd and even subsequences. Then
\[
 F_k(x_1,x_2)=R_{\lceil k/2\rceil}^{(1)}(x_1)
          R_{\lfloor k/2\rfloor}^{(2)}(x_2).
\]
For $\varphi,\psi\in C(\mathbb T)$, Fubini's theorem therefore gives
\begin{align*}
 \int_{\mathbb T^2}\varphi(x_1)\psi(x_2)F_k(x_1,x_2)\,d\LL^1(x_1)\,d\LL^1(x_2)
 &=\left(\int_{\mathbb T}\varphi R_{\lceil k/2\rceil}^{(1)}\,d\LL^1\right)
   \left(\int_{\mathbb T}\psi R_{\lfloor k/2\rfloor}^{(2)}\,d\LL^1\right)\\
 &\to
   \left(\int_{\mathbb T}\varphi\,d\nu_1\right)
   \left(\int_{\mathbb T}\psi\,d\nu_2\right)\quad\textrm{as }k\to\infty.
\end{align*}
Finite sums of these product functions are uniformly dense in
$C(\mathbb T^2)$. Since all the measures have mass one, uniform approximation
extends the convergence to every continuous test function. Thus
$F_k\LL^2\overset{*}{\rightharpoondown}\nu_1\otimes\nu_2$.\\

\emph{Step 4: Singularity of $\nu_1$, $\nu_2$.}\\
Fix $\nu$ to be either $\nu_1$ or $\nu_2$, and consider the frequency sequence $(M_j)$
that is $(N_{2j-1})_{j\ge1}$ or $(N_{2j})_{j\ge1}$, respectively.
By equation \eqref{eq:nu Fourier}, and noting that e.g. $M_j$ is represented uniquely
(by Step 1) with the choice
$\varepsilon_j=1$ and other coefficients zero, we have
\begin{equation}\label{eq:widehat nu}
 \widehat\nu(M_j)=a/2,\qquad\widehat\nu(-M_j)=a/2,\qquad
 \widehat\nu(M_j\pm M_\ell)=a^2/4\quad(j\ne\ell).
\end{equation}
We also have
\begin{equation}\label{eq:2Mj}
\widehat\nu(2M_j)=0;
\end{equation}
indeed, note that $M_j$ is $N_p$ for some $p\in\mathbb{N}$, and then for any choice of
$(\varepsilon_j)$ we have
\[
\sum_{j=1}^{p}\varepsilon_j N_j\le N_p+S_{p-1}<2N_p\quad\textrm{by }\eqref{eq:dissociation},
\]
while if $\varepsilon_{\ell}\neq 0$ for a given $\ell>p$,  we get
\[
\Big|\sum_{j=1}^{\ell}\varepsilon_j N_j\Big|
\ge N_\ell-S_{\ell-1}>3S_{\ell-1}\ge3N_p.
\]
Thus $2M_j$ has no representation as a finite sum $\sum_{j}\varepsilon_j N_j$.

Consider the continuous functions
\[
 X_j(t)=\cos(2\pi M_jt),\quad
 Z_r(t)=\frac1r\sum_{j=1}^rX_j(t).
\]
On the second and third lines below, we use the identities $\cos^2 s=\tfrac 12 (1+\cos(2s))$
and $\cos s \cos t=\tfrac 12(\cos(s-t)+\cos(s+t))$.
Thus
\begin{align*}
 \int_{\mathbb T} X_j\,d\nu&=\int_{\mathbb T}\tfrac 12 (e^{2\pi iM_jt}+e^{-2\pi iM_jt}) \,d\nu(t)=
 \frac 12 (\widehat\nu(-M_j)+\widehat\nu(M_j))=a/2\quad \textrm{by }\eqref{eq:widehat nu};\\
 \int_{\mathbb T} X_j^2\,d\nu&
 =\tfrac12\int_{\mathbb T}(1+\cos(2\pi (2M_j)t)) \,d\nu(t)
 =\tfrac12\bigl(1+\tfrac 12 (\widehat\nu(-2M_j)+\widehat\nu(2M_j))
 \bigr)=\tfrac12\quad\textrm{by }\eqref{eq:2Mj};\\
 \int_{\mathbb T} X_jX_\ell\,d\nu
 &=
 \tfrac12\int_{\mathbb T}(\cos(2\pi (M_j-M_{\ell})t)+\cos(2\pi (M_j+M_{\ell})t))\,d\nu(t)\\
 &=\tfrac12\bigl(\widehat\nu(M_j-M_\ell)
                 +\widehat\nu(M_j+M_\ell)\bigr)=a^2/4
 \quad(j\ne\ell),\quad\textrm{by }\eqref{eq:widehat nu}.
\end{align*}
Hence 
\[
\int_{\mathbb T}(X_j-a/2)(X_{\ell}-a/2)\,d\nu=0\quad\textrm{for }j\ne\ell.
\]
It follows that
\[
 \int_{\mathbb T}|Z_r-a/2|^2\,d\nu
 =\frac{1}{r^2}\sum_{j=1}^r\int_{\mathbb T}(X_j-a/2)^2\,d\mathcal \nu
 =\frac{1/2-a^2/4}{r}.
\]
Moreover, since cosine functions with distinct frequencies are orthogonal with respect to the usual
Lebesgue measure, we have
\[
 \int_{\mathbb T}|Z_r|^2\,d\LL^1
 =\frac{1}{r^2}\sum_{j=1}^r \int_{\mathbb T}X_j^2\,d\LL^1=\frac1{2r}.
\]
For every $\varepsilon>0$, Chebyshev's inequality at the dyadic indices
$r=2^s$ gives
\begin{align*}
 \sum_{s=1}^{\infty}\nu\bigl(|Z_{2^s}-a/2|>\varepsilon\bigr)
 &\le\frac{1/2-a^2/4}{\varepsilon^2}\sum_{s=1}^{\infty}2^{-s}<\infty,\\
 \sum_{s=1}^{\infty}\LL^1\bigl(|Z_{2^s}|>\varepsilon\bigr)
 &\le\frac1{2\varepsilon^2}\sum_{s=1}^{\infty}2^{-s}<\infty.
\end{align*}
Apply the first Borel--Cantelli lemma for each threshold
$\varepsilon=1,1/2,1/3,\ldots$. By the lemma, for both measures we have that outside a null set,
every such threshold is exceeded only finitely often, and so
\[
 Z_{2^s}\to a/2\quad\nu\text{-a.e.},
 \quad Z_{2^s}\to 0\quad\LL^1\text{-a.e.}
\]
The set
\[
 E_\nu=\{t\in\mathbb T:\lim_{s\to\infty}Z_{2^s}(t)=a/2\}
\]
is Borel because the $Z_{2^s}$ are continuous. It satisfies
$\nu(E_\nu)=1$ and $\LL^1(E_\nu)=0$, since $a/2\ne0$. This proves
$\nu\perp\LL^1$.
Recalling that $\nu$ is either $\nu_1$ or $\nu_2$, we get the two $\mathcal L^1$-null sets
$E_1,E_2$ which carry $\nu_1,\nu_2$, respectively.\\

\emph{Step 5: Absence of atoms and null carriers in $(0,1)$.}\\
The two measures cannot have nonzero Fourier coefficients at the same
nonzero frequency $q\in\mathbb{Z}$; otherwise, that frequency would have two
finite sum representations $\sum_{j}\varepsilon_j N_j$,
one using odd $j$ and the other using even $j$, contradicting Step 1. The Fourier coefficients of the
convolution $\nu_1*\nu_2$ are products $\widehat{\nu_1}(q)\widehat{\nu_2}(q)$, so they equal
one at $q=0$ and vanish for other $q\in\mathbb{Z}$. These are exactly the coefficients
of the Lebesgue measure; hence $\nu_1*\nu_2=\LL^1$.

Suppose we had $\nu_1(\{t_0\})=c>0$ for some $t_0\in\mathbb T$. Since
$\nu_2(E_2)=1$, we would have
\[
\nu_1*\nu_2(t_0+E_2)\ge c.
\]
This set is Lebesgue-null, whereas the convolution equals the Lebesgue measure, giving a contradiction.
The same argument applies to $\nu_2$, so neither measure has atoms.

Represent torus points by numbers in $[0,1)$ and remove $0$ from the
Borel null sets $E_1,E_2$ to obtain $A,B\subset(0,1)$. Since neither
measure charges $0$, removing it loses no mass. Thus
$\nu_1(A)=\nu_2(B)=1$, and consequently $\mu(A\times B)=1$,
as required.
\end{proof}

\section{Proof of the non-density theorem}
\label{sec:alternating}
\begin{proof}[Proof of Theorem \ref{thm:nondensity}]

\emph{Step 1: Preliminary definitions and estimates.}\\
Let $i_k=1$ for odd $k$ and $i_k=2$ for even $k$.
Set
\[
 g_k=F_k-F_{k-1}
     =a\cos(2\pi N_kx_{i_k})F_{k-1}.
\]
Here $x_{i_k}$ is the coordinate of the new oscillation, and $x_{3-i_k}$ is the
other coordinate. For a trigonometric polynomial $p$ on
$\mathbb T^2$, write
\[
 \widehat p(\xi)=\int_{\mathbb T^2}p(x)e^{-2\pi i\xi\cdot x}\,d\LL^2(x),
 \quad
 \|p\|_{\mathcal A}=\sum_{\xi\in\mathbb Z^2}|\widehat p(\xi)|.
\]
The finite Fourier expansion and the convolution formula for a product give
\begin{equation}\label{eq:A norm}
 \|p\|_{L^{\infty}(\mathbb T^2)}\le\|p\|_{\mathcal A},
 \quad
 \|pq\|_{\mathcal A}\le\|p\|_{\mathcal A}\|q\|_{\mathcal A}.
\end{equation}
For each factor $1+a\cos(2\pi N_jx_{i_j})$, the sum of the absolute Fourier
coefficients is $1+a$; for $a\cos(2\pi N_kx_{i_k})$, it is $a$. Hence,
using the latter inequality of \eqref{eq:A norm}, we get
\begin{equation}\label{eq:wiener}
 \|g_k\|_{\mathcal A}\le a(1+a)^{k-1}.
\end{equation}

Let $e_1,e_2$ be the coordinate unit vectors.
By the Fourier expansion of $g_k$,
every frequency $\xi$ with $\widehat g_k(\xi)\ne0$ has the form
\[
 \xi=\sigma N_ke_{i_k}+\sum_{j<k}\varepsilon_jN_je_{i_j},
 \quad \sigma\in\{-1,1\},\quad \varepsilon_j\in\{-1,0,1\}.
\]
The new frequency therefore dominates in the $i_k$ coordinate, while
the other coordinate contains only earlier frequencies:
\begin{equation}\label{eq:frequency bounds}
 |\xi_{i_k}|\ge N_k-S_{k-1}>0,
 \quad
 |\xi_{3-i_k}|\le S_{k-1}.
\end{equation}

Define a periodic trigonometric polynomial $q_k$ by
\begin{equation}\label{eq:primitive}
 \widehat q_k(\xi)=
 \begin{cases}
  \widehat g_k(\xi)/(2\pi i\xi_{i_k}),&\xi_{i_k}\ne0,\\
  0,&\xi_{i_k}=0.
 \end{cases}
 \quad \partial_{i_k}q_k=g_k.
\end{equation}
Differentiation multiplies the coefficient at $\xi$ by
$2\pi i\xi_{i_k}$, while $\widehat g_k(\xi)=0$ when $\xi_{i_k}=0$.
Since $g_k$ is real, its coefficients are conjugate symmetric. The
definition therefore gives $\widehat q_k(-\xi)=\overline{\widehat q_k(\xi)}$,
so $q_k$ is real.

Set
\[
 w_k=
 \begin{cases}
  (0,2q_k),&k\text{ odd},\\
  (2q_k,0),&k\text{ even},
 \end{cases}
 \quad h_k=2\partial_{3-i_k}q_k.
\]
For a smooth $w\in C^{\infty}(\mathbb T^2;\R^2)$,
let $e(w)=(\nabla w+\nabla w^T)/2$ denote the
density of $Ew=e(w)\LL^2$. For odd $k$, the identity
$\partial_1q_k=g_k$ gives
\[
 e(w_k)_{1}^2=\tfrac12\partial_1(2q_k)=g_k,
 \quad e(w_k)_{2}^2=\partial_2(2q_k)=h_k,
 \quad e(w_k)_{1}^1=0.
\]
For even $k$, the coordinates are interchanged. Thus
\begin{equation}\label{eq:block-strain}
 Ew_k=
 \begin{cases}
  \begin{pmatrix}0&g_k\\g_k&h_k\end{pmatrix}\LL^2,
       &k\text{ odd},\\[2mm]
  \begin{pmatrix}h_k&g_k\\g_k&0\end{pmatrix}\LL^2,
       &k\text{ even}.
 \end{cases}
\end{equation}
Combining \eqref{eq:primitive}, \eqref{eq:frequency bounds}, and
\eqref{eq:wiener}, we get
\begin{align*}
 \|q_k\|_{\mathcal A}
 &\le\frac{\|g_k\|_{\mathcal A}}{2\pi(N_k-S_{k-1})}
 \le\frac{a(1+a)^{k-1}}{2\pi(N_k-S_{k-1})},\\
 \|\partial_{3-i_k}q_k\|_{\mathcal A}
 &=\sum_{\xi:\,\widehat g_k(\xi)\ne0}
       \frac{|\xi_{3-i_k}|}{|\xi_{i_k}|}\,|\widehat g_k(\xi)|
 \le\frac{a(1+a)^{k-1} S_{k-1}}{N_k-S_{k-1}}.
\end{align*}
Using the first inequality of \eqref{eq:A norm},
we obtain
\begin{align}
 \|w_k\|_{L^{\infty}(\mathbb T^2)}
 &\le\frac{a(1+a)^{k-1}}{\pi(N_k-S_{k-1})},\label{eq:w-bound}\\
 \|h_k\|_{L^{\infty}(\mathbb T^2)}
 &\le\frac{2a(1+a)^{k-1}S_{k-1}}{N_k-S_{k-1}}.\label{eq:h-bound}
\end{align}
By the growth condition \eqref{eq:growth},
\[
 \|w_k\|_{L^{\infty}(\mathbb T^2)}+\|h_k\|_{L^{\infty}(\mathbb T^2)} \le 2^{-k}.
\]

\emph{Step 2: The symmetric gradient of the limit.}\\
Define
\begin{equation}\label{eq:u-series}
 u(x_1,x_2)=(x_2,x_1)+\sum_{k=1}^{\infty}w_k(x_1,x_2),
 \quad
 H_1=\sum_{k\ {\rm even}}h_k,\quad
 H_2=\sum_{k\ {\rm odd}}h_k.
\end{equation}
All three series converge uniformly by Step 2. In particular,
$u\in C(\overline Q;\R^2)$, hence $u\in L^1(Q;\R^2)$, and $H_1,H_2$
are continuous periodic functions.

Consider the finite sums
\[
 u^{(m)}=(x_2,x_1)+\sum_{k=1}^m w_k,\quad
 H_1^{(m)}=\sum_{\substack{k\le m\\k\ {\rm even}}}h_k,\quad
 H_2^{(m)}=\sum_{\substack{k\le m\\k\ {\rm odd}}}h_k.
\]
The affine function $(x_2,x_1)$ has symmetric gradient
$\left(\begin{smallmatrix}0&1\\1&0\end{smallmatrix}\right)\LL^2$.
Since $F_0=1$, the off-diagonal terms telescope:
\[
 1+\sum_{k=1}^m g_k
 =1+\sum_{k=1}^m(F_k-F_{k-1})=F_m.
\]
Together with \eqref{eq:block-strain}, this gives
\begin{equation}\label{eq:finite-strain}
 Eu^{(m)}
 =\begin{pmatrix}H_1^{(m)}&F_m\\F_m&H_2^{(m)}\end{pmatrix}\LL^2.
\end{equation}

To pass to the distributional limit, fix $\varphi\in C_c^\infty(Q)$.
For the diagonal entries, integration by parts at the finite stages and
uniform convergence give
\begin{align*}
 -\int_Q u_1\partial_1\varphi\,dx
   &=-2\lim_{m\to\infty}\int_Q\sum_{\substack{k\le m\\k\ {\rm even}}} q_k\partial_1\varphi\,dx
   =\lim_{m\to\infty}\int_QH_1^{(m)}\varphi\,dx
   =\int_QH_1\varphi\,dx,\\
 -\int_Q u_2\partial_2\varphi \,dx
   &=-2\lim_{m\to\infty}\int_Q\sum_{\substack{k\le m\\k\ {\rm odd}}} q_k\partial_2\varphi\,dx
   =\lim_{m\to\infty}\int_QH_2^{(m)}\varphi\,dx
   =\int_QH_2\varphi\,dx.
\end{align*}
For the off-diagonal entry, we similarly get
\begin{align*}
 -\frac12\int_Q
       (u_2\partial_1\varphi+u_1\partial_2\varphi)\,dx
       &= -\frac12\lim_{m\to\infty}\int_Q\Big(
       \big(x_1+2\sum_{\substack{k\le m\\k\ {\rm odd}}} q_k\big)\partial_1\varphi+\big(x_2+2\sum_{\substack{k\le m\\k\ {\rm even}}} q_k\big)\partial_2\varphi\Big)\,dx\\
 &=\lim_{m\to\infty}\int_QF_m\varphi\,dx\\
   &=\int_Q\varphi\,d\mu.
\end{align*}
The last equality follows from Lemma~\ref{lem:riesz} after extending
$\varphi$ by zero to $\mathbb T^2$. The extension is smooth because
$\varphi$ vanishes near the boundary of $Q$.

These identities give
\begin{equation}\label{eq:Eu formula}
 Eu=\begin{pmatrix}H_1&0\\0&H_2\end{pmatrix}\LL^2
       +\begin{pmatrix}0&1\\1&0\end{pmatrix}\mu,
\end{equation}
which is \eqref{eq:main-strain}. The total variation is finite, since
\[
 |Eu|(Q)\le\|H_1\|_{L^1(Q)}+\|H_2\|_{L^1(Q)}+\sqrt2\,\mu(Q)<\infty.
\]
Thus $u\in\BD(Q)$. Note that this argument identifies the symmetric sum
$\partial_1u_2+\partial_2u_1$ as a measure without requiring either
individual cross derivative to be a measure.\\

\emph{Step 3: Separation from BV functions.}\\
Let $A,B\subset (0,1)$ be the sets of Lebesgue measure zero from Lemma~\ref{lem:riesz}, set
$F=A\times B$, and write
\[
 P=\begin{pmatrix}0&1\\1&0\end{pmatrix},\quad |P|=\sqrt2.
\]
The notation $\mu\mres F$
denotes the restriction of $\mu$ to $F$, that is,
\[
\mu \mres F(H):=\mu(F\cap H),\quad H\subset Q(0,1)\textrm{ is Borel}.
\]
Since $\LL^2(F)=0$ and $\mu(F)=1$, the absolutely continuous part of
$Eu$ vanishes on $F$, giving
\[
 Eu\mres F=P\mu.
\]
For $v=(v_1,v_2)\in\BV(Q;\R^2)$, apply
\eqref{eq:BV-null-product} to both components $v_j$. All four measures
$D_kv_j$ vanish on $F$, so $Dv\mres F=0$ and $Ev\mres F=0$.
Subtracting gives
\[
 E(u-v)\mres F=P\mu.
\]
Thus
\[
 |E(u-v)|(Q)\ge |E(u-v)|(F)=|P|\mu(F)=\sqrt2.
\]
This proves \eqref{eq:main-gap}.\\

\emph{Step 4: Higher dimensions.}\\
Fix $n\ge2$ and let $Q=(0,1)^n$, and
\[
 \widetilde u(x)=(u_1(x_1,x_2),u_2(x_1,x_2),0,\ldots,0),
\]
and
\[
 \widetilde\mu=\mu\otimes
      (\LL^{n-2}\mres(0,1)^{n-2}),\quad
 \widetilde F=A\times B\times(0,1)^{n-2}.
\]
Let
\[
H'(x)=\operatorname{diag}(H_1(x_1,x_2),H_2(x_1,x_2),0,\ldots,0),
\]
and define the $n\times n$ matrix $P$ so that
$(P)_{1}^2=(P)_{2}^1=1$ and all other entries are zero. 
The first two components of $\widetilde u$ are independent of the remaining coordinates, so
\eqref{eq:Eu formula} gives
\[
 E\widetilde u=H'\LL^n+P\widetilde\mu,\qquad 
\]
In
particular, $|P|=\sqrt2$, $\widetilde u\in\BD(Q)$, and
$\widetilde\mu$ is a probability measure carried by the Lebesgue-null
set $\widetilde F$.

Recall from \eqref{eq:projection} the orthogonal projections $\pi_j$.
We have
\[
 \LL^{n-1}\bigl(\pi_j(\widetilde F)\bigr)=0
 \quad\text{for every }j=1,\ldots,n.
\]
Thus from the one-dimensional slicing \eqref{eq:slice representation for total variation}, we get
$|Dw|(\widetilde F)=0$ for any $w\in\BV(Q,\R^n)$.

Next, choose a compact set $K\subset \widetilde F$ with
$\widetilde\mu(K)>0$, which exists by inner regularity. Choose
$\eta\in C_c^\infty(Q)$ with $0\le\eta\le1$ and $\eta=1$ on $K$.
Let $v=\eta\widetilde u$ on $Q$, extended by zero to $\R^n$.
We have $v\in C_c(Q)$.
For vectors $a,b\in\R^n$, we write
$a\odot b=(a\otimes b+b\otimes a)/2$. 
The distributional product rule gives
\[
 Ev=\eta E\widetilde u+
       (\widetilde u\odot\nabla\eta)\LL^n.
\]
These are finite measures, so $v\in\BD(\R^n)$.
On $K$ the
absolutely continuous terms vanish, leaving
\[
 Ev\mres K=P\widetilde\mu.
\]
%
Hence for every $w\in\BV(Q;\R^n)$,
\[
 |E(v-w)|(Q)\ge |E(v-w)|(K)=|Ev|(K)
 = \sqrt2\,\widetilde\mu(K)>0.
\]
The generalization from $Q$ to a general open set $\Omega\subset \R^n$ is straightforward. This completes
the proof of the theorem.
\end{proof}

\section*{Acknowledgements}
Q.-H. Nguyen was supported by the CAS Project for Young Scientists in
Basic Research (Grant No.~YSBR-031) and the NSFC (Grant Nos.~1251101538
and 12595282).

\section*{Use of AI}
The ideas and arguments in this paper were developed through discussions
between the authors and ChatGPT. ChatGPT also assisted with drafting
and identifying relevant literature. The authors subsequently rewrote
the manuscript in their own words. They independently rewrote all the
proofs and verified every argument. They take full responsibility for
the results, references, and final text.

\end{document}